\long\def\delete#1{}
\newcommand{\be}{\begin{equation}}
\newcommand{\ee}{\end{equation}}
\newcommand{\ben}{\begin{equation*}}
\newcommand{\een}{\end{equation*}}
\newcommand{\bea}{\begin{eqnarray}}
\newcommand{\eea}{\end{eqnarray}}
\newcommand{\bean}{\begin{eqnarray*}}
\newcommand{\eean}{\end{eqnarray*}}
\newtheorem{thm}{Theorem}[section]
\newtheorem{lem}[thm]{Lemma}
\newtheorem{defn}[thm]{Definition}
\numberwithin{equation}{section}
\title{Extremality of graph entropy based on Laplacian degrees of $k$-uniform hypergraphs}
\author{Pengli Lu, Yulong Xue
\\
\footnotesize{School of Computer and Communication}\vspace{-0.15cm}\\
\footnotesize{Lanzhou University of Technology}\vspace{-0.15cm}\\
\footnotesize{Lanzhou, 730050, Gansu, P.R. China}\\
\small\texttt{lupengli88@163.com, xue$_-$yu$_-$long@163.com}}
\date{}
\begin{document}

\openup 0.5\jot
\maketitle
\begin{abstract}
The graph entropy describes the structural information of graph. Motivated by the definition of graph entropy in general graphs, the graph entropy of hypergraphs based on Laplacian degree are defined. Some results on graph entropy of simple graphs are extended to $k$-uniform hypergraphs. Using an edge-moving operation, the maximum and minimum graph entropy based on Laplacian degrees are determined in $k$-uniform hypertrees, unicyclic $k$-uniform hypergraphs, bicyclic $k$-uniform hypergraphs and $k$-uniform chemical hypertrees, respectively, and the corresponding extremal graphs are determined.

\bigskip

\noindent\textbf{Keywords: }graph entropy, hypergraph, Laplacian degrees

\bigskip

\noindent{{\bf AMS Subject Classification (2010):} 05C50}
\end{abstract}


\section{Introduction}
A hypergraph $\mathcal{H} = (V(\mathcal{H}),E(\mathcal{H}))$ with $n$ vertices and $m$ edges
consists of a set of vertices $V(\mathcal{H})=\{1,2,\cdots,n\}$ and a set of edges $E(\mathcal{H})=\{e_1,e_2,\cdots,e_m\}$. A hypergraph in which
all edges have the same cardinality $k$ is called $k$-uniform. In short, the number of vertices in each edge is $k$. Clearly, ordinary graphs are referred to as 2-uniform hypergraphs. A $k$-uniform hypergraph $\mathcal{H}$ is called simple if there are no multiple edges in $\mathcal{H}$, that is, all edges in $\mathcal{H}$ are distinct. A walk $W$ of length $l$ in $\mathcal{H}$ is a sequence of alternating vertices and edges: $v_0e_1v_1e_2\cdots e_lv_l$,
where $\{v_{i-1}, v_i\}\subseteq e_i$
for $i=1,\cdots, l$. If $v_0=v_l$, then $W$ is called a circuit. A walk of $\mathcal{H}$
is called a path if no vertices and edges are repeated. A circuit $\mathcal{H}$ is called a cycle if no vertices and edges are repeated except $v_0=v_l$. The hypergraph $\mathcal{H}$ is said to be connected if every two vertices are connected by a walk. A walk of $\mathcal{H}$
is called a path if no vertices and edges are repeated. In this paper, we only consider simple and connected $k$-uniform hypergraphs with $k\geq3$.

In a hypergraph, a vertex $v$ is said to be incident to an edge $e$ if $v\in e$. Two vertices
are said to be adjacent if there is an edge that contains both of these vertices. Two edges
are said to be adjacent if their intersection is not empty. For a $k$-uniform hypergraph $\mathcal{H}$, 
the degree $d$ of a vertex $v\in V(\mathcal{H})$ is defined as $d_v=|{e_j:v\in e_j\in E(\mathcal{H})}|$, which means the number of edges containing vertex $v$. A vertex of
degree $d=1$ is called a pendent vertex. Otherwise, it is called a non-pendent vertex. An edge
$e\in E(\mathcal{H})$ is called a pendent edge if $e$ contains exactly $k-1$ pendent vertices. Otherwise, it
is called a non-pendent edge.

Similar to the definition of chemical tree in \cite{Cao}, we define a chemical hypertree is a hypertree with maximum degree $d\leq4$.

\begin{defn}\cite{Hu,Li}
The $k$-th power of an ordinary tree is called a $k$-uniform hypertree.
\end{defn}

\begin{defn}\cite{Hu}
The $k$-th power of star $S_n$, denoted by $S^k_n$, is called a hyperstar.
\end{defn}

\begin{defn}\cite{Fan}
Let $\mathcal{H}$ be a $k$-uniform hypergraph with $n$ vertices, $m$ edges and $l$
connected components. The cyclomatic number of $\mathcal{H}$ is defined to be
$c(\mathcal{H}) = m(k - 1) - n + l$.
So, a hypergraph $\mathcal{H}$ can be called a $c(\mathcal{H})$-cyclic hypergraph.
\end{defn}

Since only simple and connected hypergraphs are considered in this paper, then we have $c(\mathcal{H}) = m(k - 1) - n + 1$.

The Shannon's entropy was first proposed by Shannon to solve the problem of information quantification in communication\cite{Sha}. Then, graph entropy was introduced by Dehmer \cite{Deh}. Based on this, Cao et al. \cite{Cao} gave a novel graph entropy based on
the degrees of graphs as follows.

\begin{defn}\cite{Cao}
Let $G$ be a connected graph with vertex set $V(G) = \{v_1, v_2, \cdots, v_n\}$.
Denote by $d_i$ the degree of the vertex $v_i$. Then the graph entropy based on degrees of $G$ is defined as
$$I_d(G)=-\sum\limits_{i=1}^n(\frac{d_i}{\sum\limits_{i=1}^n d_j}\log_2\frac{d_i}{\sum\limits_{i=1}^n d_j})=\log_2\sum\limits_{i=1}^n d_i-\sum\limits_{i=1}^n(\frac{d_i}{\sum\limits_{i=1}^n d_j}\log_2 d_i)$$
\end{defn}

In \cite{Cao}, they prove some extremal values for the graph entropy of certain families of graphs and find the connection between the graph entropy and the sum of degree powers. In \cite{Das}, they obtain some lower and upper bounds for graph entropy based on degree powers and characterize extremal graphs. In \cite{hu}, they determine the extremal values of some certain families of uniform hypergraph based on degree. There are many researches on the graph entropy, please refer to \cite{X,Wan,das,Che}.

The paper is organized as follows. In Section 2, some concepts and notation in graph theory are introduced. And we define the graph entropy based on the Laplacian degree. In Section 3, the extremal values of graph entropy based on Laplacian degrees of $k$-uniform hypergraphs are determined in hypertrees, unicyclic  hypergraphs, bicyclic hypergraphs and chemical hypertrees, respectively. The paper finishes with a summary and conclusion in Section 4.
\section{Preliminaries}
In this section, we give some definitions and basic results which will be used later.

\begin{defn}\cite{Rod}
Let $A(\mathcal{H})$ denote the adjacency matrix of hypergraph $\mathcal{H}$. Given two distinct vertices $v_i, v_j\in V(\mathcal{H})$
the entry $a_{ij}$ of $A(\mathcal{H})$ is the number of edges in $\mathcal{H}$ containing both $v_i$ and $v_j$; the diagonal entries
of $A(\mathcal{H})$ are zero. Then, define the Laplacian degree of a vertex $v_i\in V(\mathcal{H})$ as $\delta_\ell(v_i)=\sum\limits^n_{j=1} a_{ij}$ .
\end{defn}

Define the non-increasing Laplacian degree sequence of $\mathcal{H}$ by $\pi(\mathcal{H}) = (\delta_1, \delta_2,\cdots,\delta_n)$, that is, $\delta_1\geq \delta_2\geq\cdots \delta_n$. Note that
$$
\sum\limits^n_{i=1}\delta_i=k(k-1)m.
$$

By Definition 1.4, we have graph entropy based on the Laplacian degrees
as follows
\begin{align*}
I_\delta(\mathcal{H})&=-\sum\limits_{i=1}^n(\frac{\delta_i}{\sum\limits_{i=1}^n \delta_j}\log_2\frac{\delta_i}{\sum\limits_{i=1}^n \delta_j})\\&
=\log_2\sum\limits_{i=1}^n \delta_i-\sum\limits_{i=1}^n(\frac{\delta_i}{\sum\limits_{i=1}^n \delta_j}\log_2 \delta_i)\\&
=\log_2 k(k-1)m-\sum\limits_{i=1}^n(\frac{\delta_i}{k(k-1)m}\log_2 \delta_i).
\end{align*}
Thus, for a class of $k$-uniform hypergraphs with given number of edges, in order to determine
the extremal values of graph entropy based on the Laplacian degrees, we just need to determine the extremal values of $\sum\limits_{i=1}^n(\delta_i\log_2 \delta_i)$.
Then we define
$$
h(\mathcal{H})=\sum\limits_{i=1}^n(\delta_i\log_2 \delta_i)
$$
In the following, we first study some properties of $h(\mathcal{H})$.
\begin{defn}\cite{Li}
Let $r\geq1$ and $\mathcal{H}=(V(\mathcal{H}),E(\mathcal{H}))$ be a hypergraph with $u\in V(\mathcal{H})$ and $e_1,\cdots, e_r\in E(\mathcal{H})$ such that $u\notin e_i$ for $i = 1,\cdots, r$. Suppose that $v_i\in e_i$ and write $e'_i = (e_i \setminus \{v_i\} \cup \{u\}), (i = 1,\cdots, r)$. Let $\mathcal{H}'= (V(\mathcal{H}),E'(\mathcal{H}))$ be the hypergraph with $E'(\mathcal{H})=(E(\mathcal{H})\setminus \{e_1,\cdots, e_r\})\cup \{e'_1,\cdots, e'_r\}$. Then we say that $\mathcal{H}'$ is obtained from $\mathcal{H}$ by moving edges $(e_1,\cdots, e_r)$ from $(v_1,\cdots, v_r)$ to $u$.
\end{defn}

\begin{lem}
Let $\mathcal{H}$ be a hypergraph. If there exist two vertices $v_i$ and $v_j$ such that $\delta_i\geq \delta_j+2(k-1)$, where $v_i\in e\in E(\mathcal{H})$
and $v_j\notin e$, then define a new hypergraph $\mathcal{H}'$, which is obtained from $\mathcal{H}$ by the operation of moving edge $e$ from $v_i$ to $v_j$.
Then $h(\mathcal{H})>h(\mathcal{H}')$
\end{lem}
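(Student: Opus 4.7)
The plan is to compute $h(\mathcal{H}) - h(\mathcal{H}')$ explicitly by tracking which Laplacian degrees change under the edge-moving operation, and then invoke the strict convexity of $f(x) = x \log_2 x$.

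First I would record the identity $\delta_\ell(v) = (k-1)\, d_v$ valid for every vertex of a simple $k$-uniform hypergraph: each of the $d_v$ edges through $v$ contributes exactly $k-1$ to $\sum_{j \neq v} a_{vj}$, one count for each of its other vertices. Next I would check how moving edge $e$ from $v_i$ to $v_j$ affects each degree. Writing $e' = (e \setminus \{v_i\}) \cup \{v_j\}$, we see $d_i \mapsto d_i - 1$, $d_j \mapsto d_j + 1$, and for every other vertex $w$ the equivalence $w \in e \iff w \in e'$ holds (vertices of $e \setminus \{v_i\}$ remain in $e'$, while vertices outside $e \cup \{v_j\}$ lie in neither). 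Multiplying by $k-1$, the only Laplacian degrees that change are $\delta_i$, which drops by $k-1$, and $\delta_j$, which rises by $k-1$.

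Consequently,
$$h(\mathcal{H}) - h(\mathcal{H}') = \bigl[f(\delta_i) - f(\delta_i - (k-1))\bigr] - \bigl[f(\delta_j + (k-1)) - f(\delta_j)\bigr],$$
where $f(x) = x \log_2 x$. Since $f''(x) = 1/(x \ln 2) > 0$ on $(0, \infty)$, the derivative $f'$ is strictly increasing. Writing each bracket as $\int_0^{k-1} f'(s + t)\, dt$ with $s = \delta_i - (k-1)$ on the left and $s = \delta_j$ on the right, the hypothesis $\delta_i \geq \delta_j + 2(k-1)$ gives $\delta_i - (k-1) \geq \delta_j + (k-1) > \delta_j$, so the first integrand strictly dominates the second at every $t \in [0, k-1]$. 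This yields $h(\mathcal{H}) > h(\mathcal{H}')$.

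The only delicate point is the bookkeeping in the first paragraph, where one must verify that no Laplacian degree other than $\delta_i$ and $\delta_j$ is disturbed; once the identity $\delta_\ell(v) = (k-1)\, d_v$ is in hand this reduces to a short incidence check, and the remaining convexity estimate is routine. (Positivity of all integration bounds is automatic: connectivity forces $\delta_j \geq k-1$, and the hypothesis then gives $\delta_i - (k-1) \geq \delta_j + (k-1) \geq 2(k-1) > 0$.)
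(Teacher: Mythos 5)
Your proof is correct and takes essentially the same route as the paper's: both reduce $h(\mathcal{H})-h(\mathcal{H}')$ to comparing the increments of $f(x)=x\log_2 x$ over the intervals $[\delta_i-(k-1),\delta_i]$ and $[\delta_j,\delta_j+(k-1)]$, concluding from the monotonicity of $f'$ together with $\delta_i-(k-1)\geq\delta_j+(k-1)$; the paper phrases the increment comparison via the mean value theorem (producing $\xi_1>\xi_2$) where you integrate $f'$, which is the same estimate. Your explicit bookkeeping that only $\delta_i$ and $\delta_j$ change, via the identity $\delta_\ell(v)=(k-1)d_v$, is a worthwhile detail that the paper leaves implicit.
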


\begin{proof}
Note that $\delta_i\geq \delta_j+2(k-1)$, that is, $\delta_i > \delta_i-(k-1) \geq \delta_j + (k-1) > \delta_j$ . Then
\begin{align*}
h(\mathcal{H})-h(\mathcal{H}')=&\delta_i\log_2\delta_i+\delta_j\log_2\delta_j-(\delta_i-(k-1))\log_2(\delta_i-(k-1))\\&-(\delta_j+(k-1))\log_2(\delta_j+(k-1))\\
=&\{\delta_i\log_2\delta_i-(\delta_i-(k-1))\log_2(\delta_i-(k-1))\}+\{\delta_j\log_2\delta_j\\&-(\delta_j+(k-1))\log_2(\delta_j+(k-1))\}\\
=&(k-1)(\log_2\xi_1+\frac{1}{\ln2})-(k-1)(\log_2\xi_2+\frac{1}{\ln2})\\
=&(k-1)(\log_2\xi_1-\log_2\xi_2)\\
>&0
\end{align*}
where $\xi_1\in(\delta_i-(k-1), \delta_i)$ and $\xi_2\in (\delta_j , \delta_j + (k-1))$. This completes the proof.
\end{proof}

\begin{lem}
Let $\mathcal{H}$ be a hypergraph. If there exist two vertices $v_i$ and $v_j$ such that $\delta_i\geq\delta_j\geq2(k-1)$, where $v_j\in e\in E(\mathcal{H})$
and $v_i\notin e$, then define a new hypergraph $\mathcal{H}'$, which is obtained from $\mathcal{H}$ by the operation of moving edge $e$ from $v_j$ to $v_i$.
Then $h(\mathcal{H})<h(\mathcal{H}')$
\end{lem}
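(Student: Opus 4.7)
The plan is to mirror the proof of the previous lemma, but with the roles of the intervals swapped. First I would observe that, for a $k$-uniform hypergraph, the Laplacian degree satisfies $\delta_\ell(v)=d(v)(k-1)$, so replacing $e$ by $e'=(e\setminus\{v_j\})\cup\{v_i\}$ changes only two Laplacian degrees: $\delta_i$ increases by $k-1$ and $\delta_j$ decreases by $k-1$. All other vertices keep their Laplacian degrees, because a vertex $u\in e\setminus\{v_j\}$ loses its adjacency to $v_j$ through $e$ and gains an adjacency to $v_i$ through $e'$, so the sum $\sum_j a_{uj}$ is unchanged. Thus the problem reduces to showing
\[
\delta_i\log_2\delta_i+\delta_j\log_2\delta_j < (\delta_i+(k-1))\log_2(\delta_i+(k-1))+(\delta_j-(k-1))\log_2(\delta_j-(k-1)).
\]

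Next I would apply the mean value theorem to $f(t)=t\log_2 t$, whose derivative is $f'(t)=\log_2 t + \tfrac{1}{\ln 2}$, exactly as in Lemma 2.1. This yields
\[
h(\mathcal{H}')-h(\mathcal{H})=(k-1)\!\left(\log_2\xi_1+\tfrac{1}{\ln 2}\right)-(k-1)\!\left(\log_2\xi_2+\tfrac{1}{\ln 2}\right)=(k-1)(\log_2\xi_1-\log_2\xi_2),
\]
where now $\xi_1\in(\delta_i,\delta_i+(k-1))$ and $\xi_2\in(\delta_j-(k-1),\delta_j)$. It then remains to verify $\xi_1>\xi_2$. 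The hypothesis $\delta_i\geq\delta_j$ gives $\xi_1>\delta_i\geq\delta_j>\xi_2$, which is strict on both ends, so $\log_2\xi_1>\log_2\xi_2$ and the difference is positive.

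The only subtle step, and the one I expect to be the main obstacle to state cleanly, is the role of the assumption $\delta_j\geq 2(k-1)$: it is what guarantees $\delta_j-(k-1)\geq k-1>0$, so that $\xi_2$ lies in the domain of $\log_2$ and the mean value theorem may be applied to $f$ on the interval $[\delta_j-(k-1),\delta_j]$. Conceptually, the statement is a strict convexity (Karamata/majorization) fact: spreading the pair $(\delta_j,\delta_i)$ to $(\delta_j-(k-1),\delta_i+(k-1))$ while keeping the sum fixed strictly increases $\sum f(\delta_i)$, and the $\delta_j\geq 2(k-1)$ condition just keeps us inside the region where $f$ is strictly convex and differentiable.
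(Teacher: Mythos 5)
Your proof is correct and follows essentially the same route as the paper's: apply the mean value theorem to $f(t)=t\log_2 t$ on the two intervals $(\delta_i,\delta_i+(k-1))$ and $(\delta_j-(k-1),\delta_j)$ and compare the resulting $\xi_1>\xi_2$. The extra observations you supply --- that $\delta_\ell(v)=d(v)(k-1)$ so only the two degrees $\delta_i,\delta_j$ change, and that $\delta_j\geq 2(k-1)$ keeps $\xi_2$ in the domain of $\log_2$ --- are correct and in fact make explicit two points the paper leaves implicit.
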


\begin{proof}
Note that $\delta_i\geq\delta_j\geq2(k-1)$, that is, $\delta_i+(k-1) > \delta_i\geq \delta_j > \delta_j-(k-1)$ . Then
\begin{align*}
h(\mathcal{H})-h(\mathcal{H}')=&\delta_i\log_2\delta_i+\delta_j\log_2\delta_j-(\delta_i+(k-1))\log_2(\delta_i+(k-1))\\&-(\delta_j-(k-1))\log_2(\delta_j-(k-1))\\
=&-\{(\delta_i+(k-1))\log_2(\delta_i+(k-1))-\delta_i\log_2\delta_i\}+\{\delta_j\log_2\delta_j\\&-(\delta_j-(k-1))\log_2(\delta_j-(k-1))\}\\
=&-(k-1)(\log_2\xi_1+\frac{1}{\ln2})+(k-1)(\log_2\xi_2+\frac{1}{\ln2})\\
=&-(k-1)(\log_2\xi_1-\log_2\xi_2)\\
<&0
\end{align*}
where $\xi_1\in(\delta_i, \delta_i+k-1)$ and $\xi_2\in (\delta_j-(k-1) , \delta_j)$. This completes the proof.
\end{proof}

\begin{lem}\cite{Li}
Let $\mathcal{H}'$ be a hypergraph obtained from a unicyclic $k$-uniform hypergraph $\mathcal{H}$ by the operation of moving edge. If $\mathcal{H}'$ is connected, then $\mathcal{H}'$ is also a unicyclic $k$-uniform hypergraph.
\end{lem}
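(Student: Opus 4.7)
The plan is to derive the claim directly from the cyclomatic number formula of Definition 1.3, showing that the edge-moving operation preserves both $|V|$ and $|E|$. First I would unpack Definition 2.2: the transformed hypergraph is defined as $\mathcal{H}'=(V(\mathcal{H}),E'(\mathcal{H}))$, so the vertex set is literally unchanged and $|V(\mathcal{H}')|=n$. Each replacement edge $e'_i=(e_i\setminus\{v_i\})\cup\{u\}$ removes exactly one vertex and adds exactly one, so $|e'_i|=k$; hence $\mathcal{H}'$ is again $k$-uniform and $|E(\mathcal{H}')|=|E(\mathcal{H})|=m$.

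Given these invariants, I would apply Definition 1.3 twice. Because $\mathcal{H}$ is a connected unicyclic $k$-uniform hypergraph, $c(\mathcal{H})=m(k-1)-n+1=1$. By the standing hypothesis that $\mathcal{H}'$ is connected, its number of components equals $1$ as well, and the very same arithmetic gives $c(\mathcal{H}')=m(k-1)-n+1=1$, i.e., $\mathcal{H}'$ is unicyclic. Thus the lemma reduces to the observation that $c$ is a function solely of $m$, $n$, and the number of components, all three of which are preserved.

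The only subtle point I anticipate is ensuring that the operation does not create a coincidence $e'_i = e_j$ for some $e_j \in E(\mathcal{H})\setminus\{e_1,\ldots,e_r\}$; such a coincidence would collapse $E(\mathcal{H}')$ as a set and invalidate the count $|E(\mathcal{H}')|=m$. Since the paper's standing convention in the introduction restricts attention to simple hypergraphs, such a degenerate move is implicitly excluded from the operation. Beyond this bookkeeping caveat the proof is an invariant-counting computation tied to the cyclomatic number, and I do not expect any genuine obstacle.
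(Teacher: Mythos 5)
The paper does not actually prove this lemma---it is imported verbatim from the cited reference [Li]---so there is no in-text argument to compare against. Your proof is correct and is the natural one given this paper's conventions: Definition 1.3 together with the remark following it defines ``unicyclic'' to mean $c(\mathcal{H})=m(k-1)-n+1=1$ for a connected hypergraph, and the edge-moving operation of Definition 2.2 fixes the vertex set, fixes the number of edges, and replaces each $e_i$ by $e_i'=(e_i\setminus\{v_i\})\cup\{u\}$ with $u\notin e_i$, so $|e_i'|=k$ and uniformity is preserved; with connectedness of $\mathcal{H}'$ assumed, all quantities entering $c(\cdot)$ are unchanged. Your caveat about a possible collision $e_i'=e_j$ (or $e_i'=e_j'$) is the right thing to flag, since such a collapse would change $m$ and break the count; it is handled exactly as you say, by the standing restriction to simple hypergraphs. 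Note also that the connectedness hypothesis quietly excludes the case where some $v_i$ becomes isolated after the move, which is the other way the component count could change.
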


\begin{lem}\cite{Li}
Let $\mathcal{H}'$ be a hypergraph obtained from a bicyclic $k$-uniform hypergraph $\mathcal{H}$ by the operation of moving edge. If $\mathcal{H}'$ is connected, then $\mathcal{H}'$ is also a bicyclic $k$-uniform hypergraph.
\end{lem}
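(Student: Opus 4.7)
The key invariant is the cyclomatic number $c(\mathcal{H}) = m(k-1) - n + l$, and the plan is to show that the edge-moving operation preserves $m$, $n$, and $k$-uniformity; once that is done, the hypothesis that $\mathcal{H}'$ is connected forces $c(\mathcal{H}') = c(\mathcal{H}) = 2$, so $\mathcal{H}'$ is bicyclic. Thus the statement is essentially a one-line consequence of the formula, and the real work is verifying the three invariants.

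First I would verify the structural invariants. By Definition 2.2 the vertex set is fixed, $V(\mathcal{H}') = V(\mathcal{H})$, so $n$ is unchanged even if some vertex happens to become isolated after the move. Each of the $r$ moved edges $e_i$ is replaced by exactly one new edge $e'_i = (e_i \setminus \{v_i\}) \cup \{u\}$; since $v_i \in e_i$ and $u \notin e_i$, we have $|e'_i| = |e_i| = k$. Consequently $|E'(\mathcal{H})| = |E(\mathcal{H})| - r + r = m$, and $\mathcal{H}'$ remains $k$-uniform.

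Second, I would apply the cyclomatic-number formula. Because the ambient hypergraphs in the paper are connected, $c(\mathcal{H}) = m(k-1) - n + 1 = 2$. Under the standing hypothesis that $\mathcal{H}'$ is connected, we likewise have $c(\mathcal{H}') = m(k-1) - n + 1$, and by the invariance established above this equals $c(\mathcal{H}) = 2$, which is the claim. The one subtle point, where some care is needed, is to confirm that the move does not accidentally create a repeated edge so that $\mathcal{H}'$ is still simple and the cyclomatic formula applies in its intended form; this is either built into the notion of an admissible move or can be checked directly in the applications of the lemma to bicyclic hypergraphs. Apart from that bookkeeping, the proof is immediate from Definition 1.3 and Definition 2.2.
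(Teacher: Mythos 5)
Your argument is correct. Note that the paper itself gives no proof of this lemma --- it is quoted from the reference [Li] --- so there is nothing internal to compare against; your route via the invariance of the cyclomatic number $c(\mathcal{H}) = m(k-1) - n + l$ under the edge-moving operation (fixed vertex set, fixed edge count, preserved uniformity, and $l=1$ by hypothesis) is the standard and essentially definitional argument, since the paper defines ``bicyclic'' precisely as $c(\mathcal{H})=2$. You are also right to flag the one genuine gap in the bookkeeping: if some $e_i'$ coincides with another $e_j'$ or with an unmoved edge, the edge count drops and the formula gives a smaller cyclomatic number, so simplicity of $\mathcal{H}'$ must be assumed or checked in each application; the paper glosses over this as well.
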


Based on lemma 2.5 and lemma 2.6 we know a unicyclic $k$-uniform hypergraph is still a unicyclic $k$-uniform hypergraph after the edge shifting operation in the definition 2.2. After the edge shifting operation in the definition 2.2, the bicyclic $k$-uniform hypergraph is still a bicyclic $k$-uniform hypergraph.

Let $\gamma=\{c_i\}^n_{i=1}$ and $\beta=\{b_i\}^n_{i=1}$ be two sequences of nonnegative integers with $\sum^n_{i=1}c_i=\sum^n_{i=1}b_i$. Assuming the numbers are labeled such that $c_1\geq c_2\geq\cdots\geq c_n$ and $b_1\geq b_2\geq\cdots\geq b_n$, we say that $\gamma$ majorizes $\beta$ $(\gamma\succ\beta)$ if for all $k$
$$
\sum\limits^k_{i=1}c_i\geq\sum\limits^k_{i=1}b_i
$$

Let $\gamma=\{c_i\}^n_{i=1}$ and $\beta=\{b_i\}^n_{i=1}$. If $\gamma$ majorizes $\beta$, then $\sum\limits^k_{i=1}c_i\geq\sum\limits^k_{i=1}b_i$, and the inequality is strict if the majorization is strict.
\begin{lem}\cite{Mar}
Let a real convex function $f$ defined on an interval in $\mathds{R}$, and $x,y\in\mathds{R}^n$. Then
$$
x\prec y\Leftrightarrow\sum\limits_{i=1}^nf(x_i)\leq\sum\limits_{i=1}^nf(y_i)
$$
\end{lem}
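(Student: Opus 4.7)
The statement is the classical Hardy-Littlewood-Polya characterization of majorization by convex functions, and the plan is to split the biconditional into its two implications, with the understanding that the right-hand inequality must hold \emph{for every} convex $f$ on an interval containing the entries of $x$ and $y$.

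For the forward direction ($x\prec y \Rightarrow \sum f(x_i)\le \sum f(y_i)$), I would first reorder so that $x_1\ge \cdots \ge x_n$ and $y_1\ge \cdots \ge y_n$, which is harmless because only the multisets of values enter the sums. I would then pick a subgradient $\lambda_i\in\partial f(x_i)$ and invoke the basic convexity inequality $f(y_i)-f(x_i)\ge \lambda_i(y_i-x_i)$. Since $x_i$ is non-increasing in $i$ and $f$ is convex, the sequence $\lambda_i$ is also non-increasing, so $\lambda_i-\lambda_{i+1}\ge 0$. Abel's summation by parts then gives
$$
\sum_{i=1}^n \lambda_i(y_i-x_i) \;=\; \sum_{k=1}^{n-1}(\lambda_k-\lambda_{k+1})\,A_k + \lambda_n A_n,
$$
with $A_k=\sum_{j=1}^k(y_j-x_j)$. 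The hypothesis $x\prec y$ yields $A_k\ge 0$ for $k<n$ and $A_n=0$, so the right side is non-negative, and summing the convexity inequality over $i$ produces $\sum f(y_i)\ge \sum f(x_i)$.

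For the converse, I would apply the hypothesis to carefully chosen test functions. Taking $f(t)=t$ and $f(t)=-t$ (both convex) forces $\sum x_i=\sum y_i$, the equal-sum condition built into majorization. Next, for each real $c$ apply the hypothesis to the convex function $f_c(t)=\max(t-c,0)$, obtaining $\sum(x_i-c)_+\le \sum(y_i-c)_+$. Choosing $c=y_{[k]}$, the $k$-th largest entry of $y$, the right side collapses to $\sum_{i=1}^k y_{[i]}-kc$, while the left side dominates $\sum_{i=1}^k x_{[i]}-kc$ because $(t-c)_+\ge t-c$. Cancelling $kc$ yields the partial-sum inequality $\sum_{i=1}^k x_{[i]}\le \sum_{i=1}^k y_{[i]}$ required by $x\prec y$.

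The main obstacle is the converse direction, and within it the careful handling of ties in $y$ and of the boundary indices at which $y_{[i]}=c$: on a first pass one should assume all entries are distinct and then recover the general case by a limiting argument, choosing $c$ slightly below $y_{[k]}$ and letting $c\to y_{[k]}^-$. Once the right family of test functions has been selected, the remaining work is routine bookkeeping.
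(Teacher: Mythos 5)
The paper offers no proof of this lemma; it is quoted directly from the Marshall--Olkin--Arnold reference, so there is nothing internal to compare your argument against, and your proof must be judged on its own. It is correct: this is the standard Hardy--Littlewood--P\'olya argument. You are also right to repair the statement before proving it. As printed (a single fixed convex $f$), the implication $\Leftarrow$ is false (take $f$ linear); the lemma only holds with the quantifier ``for all convex $f$'' on the right-hand side. Note that the paper only ever uses the direction $\Rightarrow$, with $f(x)=x\log_2 x$, and that direction is exactly what your Abel-summation argument establishes: with $A_k=\sum_{j\le k}(y_j-x_j)\ge 0$, $A_n=0$, and $\lambda_k-\lambda_{k+1}\ge 0$, the conclusion follows. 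Two small remarks. First, to guarantee that the $\lambda_i$ are non-increasing you should fix one subgradient per \emph{value}, so that equal entries $x_i=x_{i+1}$ receive equal $\lambda_i=\lambda_{i+1}$; with that convention, monotonicity of the subdifferential gives $\lambda_1\ge\cdots\ge\lambda_n$ even with ties. Second, in the converse your worry about ties in $y$ is unnecessary: with $c=y_{[k]}$ the identity $\sum_i (y_i-c)_+=\sum_{i\le k}y_{[i]}-kc$ holds verbatim even when $y_{[k]}=y_{[k+1]}$, since the tied terms contribute $0$ to both sides, and the chain $\sum_i(x_i-c)_+\ge\sum_{i\le k}(x_{[i]}-c)_+\ge\sum_{i\le k}(x_{[i]}-c)$ requires no distinctness, so the limiting argument $c\to y_{[k]}^-$ can be dropped.
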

\section{Extremality of graph entropy based on the Laplacian degrees}\label{SEC:spec}
In this section, we investigate the extremality of graph entropy based on the Laplacian degrees of hypergraphs.\\

3.1 Hypertrees

\begin{thm}
Let $\mathcal{T}$ be a $k$-uniform $(k\geq3)$ hypertree on $n$ vertices with $m=\frac{n-1}{k-1}\geq2$ edges. Then
\begin{align*}
\log_2 [k(k-1)m]-\frac{1}{k(k-1)m}[(mk-m)\log_2(mk-m)+(n-1)(k-1)\log_2(k-1)]\leq I_\delta(\mathcal{T})\leq\\
\log_2[k(k-1)m]-\frac{1}{k(k-1)m}[(m-1)(2k-2)\log_2(2k-2)+(n-m+1)(k-1)\log_2(k-1)]
\end{align*}
where the first equality holds if and only if $\mathcal{T}\cong S_{m+1}^k$, and the second equality holds if and only if $\mathcal{T}\cong P_n$.
\end{thm}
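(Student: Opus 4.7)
The plan is to reduce the theorem to an extremal problem for the auxiliary quantity $h(\mathcal{T})=\sum_{i=1}^{n}\delta_{i}\log_{2}\delta_{i}$. Since $I_\delta(\mathcal{T})=\log_{2}k(k-1)m - h(\mathcal{T})/(k(k-1)m)$ and the total Laplacian degree $k(k-1)m$ depends only on $m$, the upper bound on $I_\delta$ is equivalent to the lower bound on $h$, and vice versa. In a $k$-uniform hypertree $\delta(v)=(k-1)d(v)$, so $S_{m+1}^{k}$ has Laplacian degree sequence $(m(k-1), k-1,\ldots, k-1)$ with $n-1$ copies of $k-1$, and $P_n$ has $m-1$ vertices of Laplacian degree $2(k-1)$ together with $n-m+1$ vertices of Laplacian degree $k-1$. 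Substituting these sequences into the formula for $I_\delta$ recovers exactly the two numerical expressions in the statement, so it remains to show that $S_{m+1}^{k}$ uniquely maximizes $h$ and $P_n$ uniquely minimizes $h$ over $k$-uniform hypertrees with $m$ edges.

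For the maximum of $h$ I would argue as follows. Suppose $\mathcal{T}\not\cong S_{m+1}^{k}$ and let $v_i$ be a vertex of maximum Laplacian degree; since any two edges of a hypertree share at most one vertex, a vertex that lies in every edge forces $\mathcal{T}\cong S_{m+1}^{k}$, so some edge $e$ does not contain $v_i$. Deleting $e$ from the hypertree leaves exactly $k$ components, each meeting $e$ in a single vertex, so there is a unique $v_j\in e$ lying in the same component as $v_i$. Since $v_j$ reaches $v_i$ inside that component through an edge incident to $v_j$, $d(v_j)\geq 2$ and hence $\delta(v_j)\geq 2(k-1)$; moreover $\delta(v_i)\geq\delta(v_j)$ by the choice of $v_i$. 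Lemma~2.4 then applies to the move of $e$ from $v_j$ to $v_i$. The new edge $e'=(e\setminus\{v_j\})\cup\{v_i\}$ reunites the $k$ components through $v_i$, so $\mathcal{T}'$ is connected with the same $n$ and $m$; and since two distinct edges of a hypertree share at most one vertex, $e'$ cannot coincide with any remaining edge when $k\geq 3$. Thus $\mathcal{T}'$ again has cyclomatic number $0$, i.e.\ is a $k$-uniform hypertree, with $h(\mathcal{T}')>h(\mathcal{T})$. Strict monotonicity and the finiteness of the class force the iteration to terminate at $S_{m+1}^{k}$.

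For the minimum of $h$, suppose $\mathcal{T}\not\cong P_n$, so some vertex $v_i$ satisfies $d(v_i)\geq 3$ and therefore $\delta(v_i)\geq 3(k-1)$. Fix any edge $e$ with $v_i\in e$; the component $C$ of $v_i$ in $\mathcal{T}-e$ is itself a sub-hypertree with $d_C(v_i)\geq 2$, hence has at least two edges and therefore contains a pendent vertex $v_j\neq v_i$. Because $C\cap e=\{v_i\}$, the vertex $v_j$ is not in $e$, and as a pendent vertex of $\mathcal{T}$ it has $\delta(v_j)=k-1$, so $\delta(v_i)\geq\delta(v_j)+2(k-1)$. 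Lemma~2.3 then applies to the move of $e$ from $v_i$ to $v_j$; the same component-based verification (now $v_j$ and $v_i$ lie in the same component of $\mathcal{T}-e$) shows that $\mathcal{T}'$ is again a simple connected hypertree, and $h(\mathcal{T}')<h(\mathcal{T})$. Iterating, the maximum degree strictly decreases until no vertex has degree at least $3$, at which point the hypertree is $P_n$.

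The main obstacle is not the $h$-inequalities themselves, which come directly from Lemmas~2.3 and~2.4, but the structural verification that each edge-move stays inside the class of hypertrees: one must place the auxiliary vertex $v_j$ inside the correct component of $\mathcal{T}-e$ so that the move preserves both connectivity and simplicity, and one must rule out the creation of a repeated edge (which only uses that adjacent edges of a hypertree intersect in at most one vertex and $k\geq 3$). Once these points are in place, the strict monotonicity of $h$ under the two edge-move lemmas drives the iteration deterministically to the claimed extremal hypertree in each direction, and plugging the Laplacian sequences of $S_{m+1}^{k}$ and $P_n$ back into $I_\delta$ yields the bounds stated.
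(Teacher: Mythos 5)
Your proposal is correct and follows essentially the same route as the paper: reduce the bounds on $I_\delta$ to extremal bounds on $h(\mathcal{T})=\sum\delta_i\log_2\delta_i$, then iterate the edge-moving operation via Lemma~2.3 toward $P_n$ (minimum of $h$) and Lemma~2.4 toward $S_{m+1}^k$ (maximum of $h$). You are in fact more careful than the paper, which asserts the existence of the required pairs $\delta_i,\delta_j$ and the preservation of the hypertree class without the component-based and simplicity verifications you supply.
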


\begin{proof}
Let $\mathcal{T}$ be a tree of order $n$ and let its Laplacian degree sequence be $\delta_1, \delta_2,\cdots,\delta_n$. It is not difficult to see that if $\mathcal{T}\ncong P_n$, then there must exist a pair $\delta_i$ and $\delta_j$ such that $\delta_i\geq \delta_j+2(k-1)$. We construct a tree $\mathcal{T}'$ by replacing the pair $\delta_i$ and $\delta_j$ by the pair $\delta_i-(k-1)$ and $\delta_j+(k-1)$.
Then by Lemma 2.3, we obtain that $h(\mathcal{T})> h(\mathcal{T}')$. Repeating the above operation until there is no pair $\delta_i$ and $\delta_j$ such that
$\delta_i-(k-1)$ and $\delta_j+(k-1)$ for all $i, j$, we obtain a tree sequence $T, T', T'_1,\cdots, T'_s$ such that $T'_s \cong P_n$. Clearly, $h(T)>h(T')> h(T'_1)>\cdots>h(T'_s)$. Thus, for any tree $\mathcal{T}\ncong P_n$, $h(\mathcal{T})> h(P_n)$. Note that Laplacian degree sequence $\pi(P_n)=[2k-2^{(m-1)}, k-1^{(n-m+1)}]$.

Using Lemma 2.4 and by a similar discussion, we can show that if $\mathcal{T}$ is a tree of order $n$, then any tree $\mathcal{T}\ncong S_{m+1}^k$, $h(\mathcal{T})< h(S_{m+1}^k)$. Note that Laplacian degree sequence $\pi(S_{m+1}^k)=[mk-m, k-1^{(n-1)}]$.

From the discussion above, we note that only $\mathcal{T}\cong S_{m+1}^k$ can attain the maximum
value of graph entropy among all $k$-uniform $(k\geq3)$ hypertrees on $n$ vertices with $m=\frac{n-1}{k-1}\geq 2$ edges. And only $\mathcal{T}\cong P_n$ can attain the minimum value of graph entropy among all $k$-uniform $(k\geq3)$ hypertrees on $n$ vertices with $m=\frac{n-1}{k-1}\geq 2$ edges. So we have
\begin{align*}
\log_2 [k(k-1)m]-\frac{1}{k(k-1)m}[(mk-m)\log_2(mk-m)+(n-1)(k-1)\log_2(k-1)]\leq I_\delta(\mathcal{T})\\
\leq\log_2[k(k-1)m]-\frac{1}{k(k-1)m}[(m-1)(2k-2)\log_2(2k-2)+(n-m+1)(k-1)\log_2(k-1)]
\end{align*}
This completes the proof.\\
\end{proof}
3.2 Unicyclic hypergraphs

Let $\mathcal{H}^{\divideontimes}$ be a unicyclic $k$-uniform hypergraph with Laplacian degree sequence $\pi(\mathcal{H}^{\divideontimes})=[2k-2^{(m)}, k-1^{(n-m)}]$. Denote by $\mathcal{H}^{\divideontimes\divideontimes}$ the unicyclic $k$-uniform hypergraph with Laplacian degree sequence $\pi(\mathcal{H}^{\divideontimes\divideontimes})=[2k-3^{(2)}, k-1^{(n-2)}]$.
\begin{thm}
Let $\mathcal{H}$ be a unicyclic $k$-uniform $(k\geq3)$ hypergraph on $n$ vertices with $m=\frac{n}{k-1}\geq2$ edges. Then
\begin{align*}
\log_2 [k(k-1)m]-\frac{1}{k(k-1)m}\{2(2k-3)\log_2(2k-3)+(n-2)(k-1)\log_2(k-1)\}\leq I_\delta(\mathcal{H})\\
\leq\log_2[k(k-1)m]-\frac{1}{k(k-1)m}\{2m(k-1)\log_2[2(k-1)]+(n-m)(k-1)\log_2(k-1)\}
\end{align*}
where the first equality holds if and only if $\mathcal{H}\cong\mathcal{H}^{\divideontimes\divideontimes}$, and the second equality holds if and only if $\mathcal{H}\cong\mathcal{H}^{\divideontimes}$.
\end{thm}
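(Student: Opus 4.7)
The plan is to mirror the proof of Theorem 3.1, working now inside the class of unicyclic $k$-uniform hypergraphs; the extra ingredient is Lemma 2.5, which guarantees that an edge-moving operation that preserves connectivity stays in the unicyclic class. The bounds then follow from a monotonicity-and-termination argument on $h(\mathcal{H})$, paired with the formula $I_\delta(\mathcal{H})=\log_2[k(k-1)m]-h(\mathcal{H})/[k(k-1)m]$.

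For the upper bound on $I_\delta(\mathcal{H})$ (equivalently a lower bound on $h(\mathcal{H})$), I take any unicyclic $\mathcal{H}\not\cong\mathcal{H}^\divideontimes$ and look for a pair $v_i,v_j$ and an edge $e$ with $\delta_i\geq\delta_j+2(k-1)$, $v_i\in e$, $v_j\notin e$, so as to apply Lemma 2.3. From the identity $\sum_v(d_v-1)=km-n=m$ together with $n=(k-1)m$ it follows that if the ordinary degree sequence is not $(2^{(m)},1^{(n-m)})$ then some vertex has degree $\geq 3$, while simultaneously at least $(k-2)m\geq 2$ pendants remain. Picking $v_i$ of degree $\geq 3$, a pendant $v_j$, and any $e$ on $v_i$ (automatically missing $v_j$), Lemma 2.3 gives $h(\mathcal{H})>h(\mathcal{H}')$; I will select $e$ so that $v_i$ retains another incident edge, which is possible because $d_i\geq 3$ and guarantees that $\mathcal{H}'$ is connected, so Lemma 2.5 keeps me inside the unicyclic class. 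Iteration strictly decreases $h$ and so must terminate, and the only terminal sequence compatible with the constraints is $\pi(\mathcal{H}^\divideontimes)$; substituting this sequence into the formula for $I_\delta$ yields the stated upper bound.

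For the lower bound on $I_\delta$ (equivalently an upper bound on $h$), I run the analogous procedure using Lemma 2.4: as long as $\mathcal{H}\not\cong\mathcal{H}^{\divideontimes\divideontimes}$, I locate $v_i,v_j,e$ with $\delta_i\geq\delta_j\geq 2(k-1)$, $v_j\in e$, $v_i\notin e$, and move $e$ from $v_j$ to $v_i$; Lemma 2.4 gives $h(\mathcal{H})<h(\mathcal{H}')$ and Lemma 2.5 again preserves unicyclicity. The iteration concentrates excess degree onto a single hub vertex until no qualifying pair remains, i.e., until $\mathcal{H}$ reaches the sequence $\pi(\mathcal{H}^{\divideontimes\divideontimes})$, at which point plugging into the $I_\delta$ formula delivers the lower bound.

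The main obstacle I foresee is ensuring connectivity throughout the iteration so that Lemma 2.5 can actually be invoked. In the max-$h$ reduction this requires picking an edge $e$ on $v_i$ whose removal does not disconnect the local substructure, which is possible because $d_i\geq 3$ and pendant edges are always present; in the min-$h$ reduction, a dual argument uses the availability of $v_i$ of at least equal degree as a safe target. A secondary subtlety is that the extremal hypergraph need not be unique up to isomorphism, but since $h$ depends only on the Laplacian degree sequence, the extremal values of $I_\delta$ are nonetheless pinned down.
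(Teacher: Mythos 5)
Your first half --- the upper bound on $I_\delta(\mathcal{H})$, equivalently the lower bound on $h(\mathcal{H})$ with extremal Laplacian sequence $[2k-2^{(m)},k-1^{(n-m)}]$ --- is essentially sound, and it is worth noting that it takes a different route from the paper: for this theorem the paper drops the edge-moving iteration entirely and argues via majorization and the convexity of $x\log_2 x$ (Lemma 2.7), while your iterative argument is closer to the paper's treatment of the bicyclic case (Lemma 3.3). One caveat: your connectivity justification (``$v_i$ retains another incident edge'') is not by itself sufficient, since moving $e$ off $v_i$ can still strand the branch of $\mathcal{H}-e$ containing $v_i$ if the target pendant vertex lies on the far side of $e$; you need to choose the target in the same component of $\mathcal{H}-e$ as the material you are detaching, e.g.\ the endpoint of a longest pendent path, as the paper does in Lemma 3.3.

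The second half has a genuine gap. The operation of Lemma 2.4 moves an edge \emph{from} a vertex of smaller Laplacian degree \emph{to} one of larger Laplacian degree, so each step strictly increases the maximum degree; iterating it concentrates all $m$ edges onto a single hub and terminates at the sequence $[m(k-1),\,2(k-1),\,(k-1)^{(n-2)}]$, not at $\pi(\mathcal{H}^{\divideontimes\divideontimes})=[2k-3^{(2)},k-1^{(n-2)}]$. The latter sequence has maximum entry $2k-3<2(k-1)$ and is majorized by every competing sequence, so by Lemma 2.7 it lies at the \emph{minimal}-$h$ end of the ordering; your iteration moves away from it, not toward it, so it cannot be the terminal state you claim. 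Worse, under Definition 2.1 every Laplacian degree equals $(k-1)d_v$ and is therefore a multiple of $k-1$, so for $k\geq 3$ no vertex of a $k$-uniform hypergraph can have Laplacian degree $2k-3$, and the entries of $[2k-3^{(2)},k-1^{(n-2)}]$ do not sum to $k(k-1)m$ in general. The object your procedure is supposed to converge to does not exist as described. (This defect is inherited from the theorem statement itself --- the paper's own proof asserts the same extremal sequence with no more justification --- but your proof as written cannot close, because the claimed fixed point of your procedure is not a fixed point of your procedure.)
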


\begin{proof}
Let $f(x)=x\log_2 x$ for $x\geq0$. Obviously, $f(x)$ is strictly convex. So $h(\mathcal{H})$ satisfy the conclusion of Lemma 2.7.
The larger the first $k$ term of the non-increasing Laplacian degree sequence $\pi(\mathcal{H}) = (\delta_1, \delta_2,\cdots,\delta_n)$ is, then the bigger $h(\mathcal{H})$ is. On the contrary, the result is also the opposite. In other words, because the sum of degrees is $k(k-1)m$, then there is the sequence with more $k-1$, $h(\mathcal{H})$ is the larger.

And we know, there are three kinds of structures in the unicyclic $k$-uniform hypergraphs: multi-pendent edges, single pendent edge and no pendent edge. It is not difficult to see that the non-increasing Laplacian degree sequence without pendent edge have the least numbers of term $k-1$, which equals $n-m$. So when unicyclic $k$-uniform hypergraph have no pendent edge, the Laplacian degree sequence $\pi(\mathcal{H}^{\divideontimes})=[2k-2^{(m)}, k-1^{(n-m)}]$, $h(\mathcal{H})$ has the minimum value.
\begin{align*}
h(\mathcal{H})\geq 2m(k-1)\log_2[2(k-1)]+(n-m)(k-1)\log_2(k-1)
\end{align*}

When the number of edges contained on the cycle is the same, the non-increasing Laplacian degree sequence of multiple pendent edges has more $k-1$ terms than that of single pendent edge. We consider the extreme state in the case of multiple pendent edges. In a unicyclic $k$-uniform hypergraph with multiple pendent edges, the number of $k-1$ is up to $n-2$, the Laplacian degree sequence $\pi(\mathcal{H}^{\divideontimes\divideontimes})=[2k-3^{(2)}, k-1^{(n-2)}]$. Therefore, in this case, $h(\mathcal{H})$ gets the maximum value.
\begin{align*}
h(\mathcal{H})\leq 2(2k-3)\log_2(2k-3)+(n-2)(k-1)\log_2(k-1)
\end{align*}

From the discussion above, we have
\begin{align*}
\log_2 [k(k-1)m]-\frac{1}{k(k-1)m}\{2(2k-3)\log_2(2k-3)+(n-2)(k-1)\log_2(k-1)\}\leq I_\delta(\mathcal{H})\\
\leq\log_2[k(k-1)m]-\frac{1}{k(k-1)m}\{2m(k-1)\log_2[2(k-1)]+(n-m)(k-1)\log_2(k-1)\}
\end{align*}
This completes the proof.\\
\end{proof}
3.3 Bicyclic hypergraphs

Let $\mathcal{H}^*$ be a bicyclic $k$-uniform hypergraph with Laplacian degree sequence $\pi(\mathcal{H}^*)=[2k-2^{(m+1)}, k-1^{(n-m-1)}]$. Denote by $\mathcal{H}^{**}$ the bicyclic $k$-uniform hypergraph with Laplacian degree sequence $\pi(\mathcal{H}^{**})=[mk-m, 2k-3^{(2)}, k-1^{(n-3)}]$.

\begin{lem}
Let $\mathcal{H}$ be a bicyclic $k$-uniform hypergraph of order n. Then we have $h(\mathcal{H})\geq h(\mathcal{H}^*)$, the equality holds if and only if $\mathcal{H}\cong\mathcal{H}^*$; and $h(\mathcal{H})\leq h(\mathcal{H}^{**})$, the equality holds if and only if $\mathcal{H}\cong\mathcal{H}^{**}$.
\end{lem}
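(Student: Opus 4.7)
The plan is to mirror the proof of Theorem 3.2 for the unicyclic case, relying on the edge-moving lemmas (Lemma 2.3 and Lemma 2.4) together with Lemma 2.6, which ensures the bicyclic class is closed under the edge-shifting operation of Definition 2.2. Intuitively, the convex function $f(x) = x\log_2 x$ and Lemma 2.7 tell us that $h(\mathcal{H})$ is minimized when the Laplacian degree sequence is as flat as possible, and maximized when it is as concentrated as possible; the claimed extremal hypergraphs $\mathcal{H}^*$ and $\mathcal{H}^{**}$ should be precisely these flattest and most concentrated configurations within the bicyclic class on $n$ vertices with $m$ edges.

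For the lower bound $h(\mathcal{H}) \geq h(\mathcal{H}^*)$, I would begin with an arbitrary bicyclic hypergraph $\mathcal{H}$ and argue that if $\mathcal{H} \ncong \mathcal{H}^*$, one can locate vertices $v_i, v_j$ with $\delta_i \geq \delta_j + 2(k-1)$ and an edge $e$ containing $v_i$ but not $v_j$. Applying Lemma 2.3 and then Lemma 2.6, we obtain a new bicyclic hypergraph $\mathcal{H}'$ with $h(\mathcal{H}') < h(\mathcal{H})$. Iterating this construction produces a strictly descending sequence of bicyclic hypergraphs that must terminate; at the terminal configuration no such pair $(v_i, v_j)$ exists, which combined with the fixed degree sum $k(k-1)m$ and the number of edges $m$ forces the Laplacian degree sequence to coincide exactly with $\pi(\mathcal{H}^*)=[2k-2^{(m+1)}, k-1^{(n-m-1)}]$.

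For the upper bound $h(\mathcal{H}) \leq h(\mathcal{H}^{**})$, the dual argument proceeds symmetrically: whenever $\mathcal{H} \ncong \mathcal{H}^{**}$, there must exist vertices $v_i, v_j$ with $\delta_i \geq \delta_j \geq 2(k-1)$ together with an edge $e$ containing $v_j$ but not $v_i$, so Lemma 2.4 (again combined with Lemma 2.6) yields a bicyclic $\mathcal{H}'$ with $h(\mathcal{H}') > h(\mathcal{H})$. The terminal configuration of this concentrating process gathers as much Laplacian mass as possible at a single vertex of degree $mk-m$, while only the two vertices needed to close the two independent cycles of the bicyclic backbone retain intermediate degree, matching $\pi(\mathcal{H}^{**})$.

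The main obstacle I foresee is verifying, at each iteration step, that the edge-moving operation can be chosen so as to preserve connectivity, since Lemma 2.6 is phrased conditionally on the output being connected; one must typically restrict attention to an edge incident to the larger-degree vertex that lies outside the bicyclic core, or is itself pendent. A cleaner alternative is to bypass iterated edge moves altogether: prove directly that for every Laplacian degree sequence $\pi(\mathcal{H})$ realizable by a bicyclic $k$-uniform hypergraph on $n$ vertices with $m$ edges, the majorization chain $\pi(\mathcal{H}^*) \prec \pi(\mathcal{H}) \prec \pi(\mathcal{H}^{**})$ holds, and then invoke Lemma 2.7 with the strictly convex function $x\log_2 x$ to obtain both bounds simultaneously, with the uniqueness of the extremizers following from the strict convexity together with a structural check that each extremal sequence is uniquely realized by $\mathcal{H}^*$ and $\mathcal{H}^{**}$ respectively.
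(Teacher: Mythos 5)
Your overall engine is the same as the paper's (iterated edge moves justified by Lemmas 2.3 and 2.4, with Lemma 2.6 keeping you inside the bicyclic class), but the proposal defers exactly the step that constitutes the paper's proof. The paper's argument is almost entirely a structural case analysis: for the minimum, it assumes a minimizer has a vertex of large Laplacian degree, distinguishes whether that vertex lies on a cycle or not, walks to the end of a longest path emanating from it, and moves an edge to that path's terminal vertex --- this choice is what guarantees the move is legal (simple, connected, still bicyclic) and strictly decreases $h$; for the maximum, it establishes three separate structural claims about a maximizer (every edge with $k-1$ pendent vertices contains the maximum-degree vertex $u$; $u$ lies on every cycle; every cycle consists of exactly two edges) before concluding $\mathcal{H}\cong\mathcal{H}^{**}$. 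You name the connectivity issue as ``the main obstacle'' and then do not resolve it; that obstacle is the proof.

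There is also a concrete problem in your termination argument for the lower bound. You claim that once no pair with $\delta_i\geq\delta_j+2(k-1)$ remains, the degree sequence must equal $\pi(\mathcal{H}^*)=[2k-2^{(m+1)},k-1^{(n-m-1)}]$. This would be plausible if all Laplacian degrees were multiples of $k-1$, but in a bicyclic hypergraph a cycle of length two (two edges sharing two vertices) produces vertices of Laplacian degree $2k-3$, as the paper's own sequence $\pi(\mathcal{H}^{**})$ shows. A bicyclic hypergraph containing two disjoint $2$-edge cycles joined by a path has maximum Laplacian degree $2k-2<(k-1)+2(k-1)$, so it admits no pair with gap $2(k-1)$, yet its degree sequence is not $\pi(\mathcal{H}^*)$. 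Hence the descending process can stall at a non-extremal configuration, and an additional comparison (or a structural argument ruling such configurations out as minimizers) is required. The same gap affects your proposed majorization shortcut: the chain $\pi(\mathcal{H}^*)\prec\pi(\mathcal{H})\prec\pi(\mathcal{H}^{**})$ is asserted, not proved, and proving it would require essentially the structural analysis you are trying to bypass.
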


\begin{proof}
We do the proof by contradiction. Suppose $\mathcal{H}$ attains the minimum value among all bicyclic $k$-uniform hypergraphs. We claim that there exists at least one vertex $u$ fulfills $\delta_u\geq3k-3$. Otherwise, let $e$ be a non-pendent edge containing $u$ and  $C=v_1e_1v_2e_2\cdots e_tv_{t+1}$ $(v_{t+1}=v_1)$ be a cycle in $\mathcal{H}$.

\textbf{Case 1.} $u\in C$.

Without loss of generality, we assume $e = e_1$ and $u\in e_1$. Find a longest path $P =
uf_1u_1f_2\cdots f_su_s$ starting at $u$ such that $u_i \notin C$ and $f_i \notin C$ for $i = 1, 2, \cdots , s$. Obviously, $\delta_{u_s} = k-1$.

\textbf{Subcase 1.1} $u = v_1$ or $v_2$.

Without loss of generality, we assume $u = v_1$. Then $\delta_{v_1}\geq 3$. Let $\mathcal{H}' = (V(\mathcal{H}),E(\mathcal{H}'))$ be the hypergraph with $E(\mathcal{H}') = (E(\mathcal{H})\setminus \{e_1\})\cup \{e'_1\}$, where $e'_1 = (e_1\setminus \{v_1\})\cup \{u_s\}$. Then, by Lemma 2.3, we obtain that $h(\mathcal{H}) > h(\mathcal{H}')$, a contradiction.

\textbf{Subcase 1.2} $u\neq v_1, v_2$.

Let $f_1\neq e_1$ denote another edge containing $u$. Let $\mathcal{H}' = (V(\mathcal{H}),E(\mathcal{H}') )$ be the hypergraph with $E(\mathcal{H}') = (E(\mathcal{H})\setminus \{e_1,f_1\})\cup \{e'_1,f'_1\}$, where $e'_1 = (e_1\setminus \{v_1\})\cup \{u_s\}$ and $f'_1 = (f_1\setminus \{u\})\cup \{v_1\}$.
Then, by Lemma 2.3, we obtain that $h(\mathcal{H}) > h(\mathcal{H}')$, a contradiction.

\textbf{Case 2.} $u \notin C$.

Find a longest path $P = vf_1u_1f_2\cdots f_su_s$ ($u_i\notin C$ and $f_i\notin C$ for $i = 1, 2, \cdots , s$) starting
at $v$ such that $v\in e_i$ for some $i\in \{1, 2,\cdots , t\}$ and $f_{j-1}\cap f_j = \{u\}$ for some $j\in\{1, 2, \cdots , s\}$.
Let$\mathcal{H}' = (V(\mathcal{H}),E(\mathcal{H}') )$ be the hypergraph with $E(\mathcal{H}') = (E(\mathcal{H})\setminus \{e_1,f_j\})\cup \{e'_1,f'_j\}$, where $e'_1 = (e_1\setminus \{v_i\})\cup \{u_s\}$ and $f'_j = (f_j\setminus \{u\})\cup \{v_i\}$. Then, by Lemma 2.3, we obtain that $h(\mathcal{H}) > h(\mathcal{H}')$, a contradiction.

From the discussion above, we note that only $\mathcal{H}\cong\mathcal{H}^*$, can attain the minimum value of $h(\mathcal{H})$. By simple computation, we have $h(\mathcal{H})\geq(m+1)(2k-2)\log_2(2k-2)+(n-m-1)(k-1)\log_2(k-1)$.

Suppose $\mathcal{H}$ attains the maximum value among all bicyclic $k$-uniform hypergraphs. Let $u$ be a vertex of the maximum  Laplacian degree.

\textbf{Claim 1.} If $e\in E(\mathcal{H})$ has $k-1$ pendent vertices, then $u\in e$.

On the contrary, we suppose that $e\in E(\mathcal{H})$ has $k-1$ pendent vertices but $u\notin e$. Let $w\in e$ be the
vertex with $\delta_w > k-1$. Construct a new hypergraph $\mathcal{H}' = (V(\mathcal{H}),E(\mathcal{H}'))$ with $E(\mathcal{H}') = (E(\mathcal{H})\setminus \{e\})\cup \{e'\}$, where $e' = (e\setminus \{w\})\cup \{u\}$. By Lemma 2.4, we have $h(\mathcal{H}) < h(\mathcal{H}')$, a contradiction.

\textbf{Claim 2.} $u$ is a common vertex of all the cycles of $\mathcal{H}$.

On the contrary, assume $u$ does not lie on the cycle $C = v_1e_1v_2e_2\cdots e_tv_{t+1} (v_{t+1} = v_1)$ of $\mathcal{H}$. Find a path $P = vf_1u_1f_2\cdots u_{s-1}f_su$ ($u_i\notin C$ for $i = 1, 2, \cdots , s-1$ and $f_j\notin C$ for $j = 1, 2, \cdots , s$) starting at $v$ such that $v\in e_i$ for some $i\in \{1, 2,\cdots , t\}$. Let $\mathcal{H}' = (V(\mathcal{H}),E(\mathcal{H}'))$ be the hypergraph with $E(\mathcal{H}') = (E(\mathcal{H})\setminus \{e_i,f_1\})\cup \{e'_i,f'_1\}$ where $e'_i = (e_i\setminus \{v_i\})\cup \{u\}$ and $f'_1 = (f_1\setminus \{v\})\cup \{v_i\}$. By Lemma 2.4, we have $h(\mathcal{H}) < h(\mathcal{H}')$, a contradiction.

\textbf{Claim 3.} $|e(C)| = 2$ for any cycle $C$ in $\mathcal{H}$, where $e(C) = \{e \mid e$ is an edge $\in C\}$.

We use $C = v_1e_1v_2e_2\cdots e_tv_{t+1}$ $(v_{t+1} = v_1)$ to denote a cycle in $\mathcal{H}$. Without
loss of generality, suppose that $v_2\neq u\in e_1$ ($u$ can be equal to $v_1$). If $|e(C)|\geq 3$,
then let $\mathcal{H}' = (V(\mathcal{H}),E(\mathcal{H}'))$ be the hypergraph with $E(\mathcal{H}') = (E(\mathcal{H})\setminus \{e_2\})\cup \{e'_2\}$, where $e'_2 = (e_2\setminus \{v_2\})\cup \{u\}$. By Lemma 2.4, we have $h(\mathcal{H}) < h(\mathcal{H}')$, a contradiction.

From the discussion above, we note that only $\mathcal{H}\cong\mathcal{H}^{**}$, can attain the maximum value of $h(\mathcal{H})$. By simple computation, we have $h(\mathcal{H})\leq (mk-k)\log_2(mk-k)+(4k-6)\log_2(2k-3)+(n-3)(k-1)\log_2(k-1)$. This completes the proof.

\end{proof}

By the definition of the graph entropy based on the Laplacian degrees and Lemma 3.3, we have the following result.

\begin{thm}
Let $\mathcal{H}$ be a bicyclic $k$-uniform $(k\geq3)$ hypergraph on $n$ vertices with $m=\frac{n+1}{k-1}\geq2$ edges. Then
\begin{align*}
\log_2 [k(k-1)m]-\frac{1}{k(k-1)m}\{(mk-k)\log_2(mk-k)+(4k-6)\log_2(2k-3)+(n-3)(k-1)\\
\log_2(k-1)\}\leq I_\delta(\mathcal{H})
\leq\log_2[k(k-1)m]-\frac{1}{k(k-1)m}\{(m+1)(2k-2)\log_2(2k-2)+\\(n-m-1)(k-1)\log_2(k-1)\}
\end{align*}
where the first equality holds if and only if $\mathcal{H}\cong \mathcal{H^{**}}$, and the second equality holds if and only if $\mathcal{H}\cong\mathcal{H^*}$.
\end{thm}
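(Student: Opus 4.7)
The plan is to obtain Theorem~3.4 as a direct consequence of Lemma~3.3 via the identity
$$I_\delta(\mathcal{H}) = \log_2[k(k-1)m] - \frac{h(\mathcal{H})}{k(k-1)m}$$
derived just after Definition~2.1 (using $\sum_{i=1}^n \delta_i = k(k-1)m$). Since the prefactor $\log_2[k(k-1)m]$ is the same for every bicyclic $k$-uniform hypergraph of order $n$ with $m=(n+1)/(k-1)$ edges, maximising $I_\delta$ over this class is equivalent to minimising $h$, and minimising $I_\delta$ is equivalent to maximising $h$; the two extremal graphs and the two extremal values of $I_\delta$ will therefore be read off directly from the two extremal values of $h$ supplied by Lemma~3.3.

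Concretely, I would first invoke Lemma~3.3 in the form $h(\mathcal{H}) \geq h(\mathcal{H}^*)$, which yields
$$I_\delta(\mathcal{H}) \leq \log_2[k(k-1)m] - \frac{h(\mathcal{H}^*)}{k(k-1)m},$$
the upper bound on $I_\delta$. The companion inequality $h(\mathcal{H}) \leq h(\mathcal{H}^{**})$ then gives the matching lower bound. Lemma~3.3 also pins down the equality cases as $\mathcal{H} \cong \mathcal{H}^*$ and $\mathcal{H} \cong \mathcal{H}^{**}$ respectively, which is exactly what the theorem claims.

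Next I would compute $h(\mathcal{H}^*)$ and $h(\mathcal{H}^{**})$ from the Laplacian degree sequences listed just before Lemma~3.3:
\begin{align*}
h(\mathcal{H}^*) &= (m+1)(2k-2)\log_2(2k-2) + (n-m-1)(k-1)\log_2(k-1),\\
h(\mathcal{H}^{**}) &= (mk-k)\log_2(mk-k) + (4k-6)\log_2(2k-3) + (n-3)(k-1)\log_2(k-1).
\end{align*}
Substituting these into the two inequalities above, while leaving the common $\log_2[k(k-1)m]$ factor untouched, reproduces the displayed bounds in the statement of the theorem term for term.

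No genuine obstacle remains at this stage, because every combinatorial difficulty---locating the extremal configurations by repeated application of the edge-moving operation together with Lemmas~2.3--2.6---has already been packaged inside Lemma~3.3. The only step that calls for slight care is a consistency check that both extremal degree sequences sum to $k(k-1)m$ using the bicyclic identity $n = m(k-1)-1$ (from $c(\mathcal{H})=2$); once that is confirmed, the rest is a single line of algebraic substitution.
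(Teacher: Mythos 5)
Your proposal is correct and is exactly the paper's route: the paper derives Theorem~3.4 in one line from Lemma~3.3 together with the identity $I_\delta(\mathcal{H})=\log_2[k(k-1)m]-h(\mathcal{H})/[k(k-1)m]$, just as you do. (Incidentally, the consistency check you suggest is worthwhile: the sequence $[mk-m,\,2k-3^{(2)},\,k-1^{(n-3)}]$ stated for $\mathcal{H}^{**}$ sums to $k(k-1)m-2$, so the paper's $2k-3$ entries appear to be a slip for $2k-2$, but that is an issue with the source, not with your argument.)
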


3.4 Chemical hypertrees

We define $\mathscr{T}^*$ as follows: $\mathscr{T}^*$ is a hypertree with $n$ vertices and $m-1=3a+i$, $(i=0,1,2)$, whose Laplacian degree sequence  $\pi(\mathscr{T}^*)=[4k-4^{(a)},(i+1)(k-1),k-1^{(n-a-1)}]$.
\begin{lem}
Let $\mathscr{T}$ be a chemical $k$-uniform $(k\geq3)$ hypertree on $n$ vertices. Then we get $h(\mathscr{T})\leq h(\mathscr{T}^*)$, the  equality holds if and only if $\mathscr{T}\in \mathscr{T}^*$.
\end{lem}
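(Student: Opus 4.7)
The plan is to reduce the problem to a small integer optimization over the degree profile of $\mathscr{T}$ and then exhibit an explicit realization. Since $\mathscr{T}$ is a chemical $k$-uniform simple hypertree, every vertex $v$ has ordinary degree $d_v\in\{1,2,3,4\}$ and Laplacian degree $\delta_v=(k-1)d_v$. Writing $n_j$ for the number of vertices of ordinary degree $j$, the edge-counting identity $\sum_v d_v = km$ together with the order identity $n_1+n_2+n_3+n_4 = n = m(k-1)+1$ combine to give the handshake relation
\[
n_2+2n_3+3n_4 \;=\; m-1.
\]
Factoring $(k-1)$ out of each Laplacian degree yields
\[
h(\mathscr{T}) \;=\; (k-1)\sum_v d_v\log_2 d_v \;+\; k(k-1)m\log_2(k-1),
\]
so maximizing $h(\mathscr{T})$ over chemical hypertrees is equivalent to maximizing $F:=2n_2+(3\log_2 3)n_3+8n_4$ subject to the handshake relation and $n_j\geq 0$ integers.

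The crux of the proof is a short greedy exchange. One unit of the budget $m-1$ assigned to $n_4$ contributes $8/3\approx 2.667$ to $F$, against $(3\log_2 3)/2\approx 2.377$ for $n_3$ and $2$ for $n_2$; more importantly, trading a single degree-$4$ vertex (three budget units, gain $8$) for any combination of degree-$3$ and degree-$2$ vertices spending the same three units gains at most $\max\{3\log_2 3+2,\,6\}<8$, a strict loss. Iterating this exchange forces $n_4$ to its ceiling. Writing $m-1=3a+i$ with $i\in\{0,1,2\}$, the inequality $3n_4\leq m-1$ gives $n_4\leq a$; setting $n_4=a$, the residual $i$ units of budget are then optimally filled by $(n_3,n_2)=(0,0),\,(0,1),\,(1,0)$ in the three cases $i=0,1,2$, producing exactly $\pi(\mathscr{T}^*)=[4(k-1)^{(a)},(i+1)(k-1),(k-1)^{(n-a-1)}]$. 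A cleaner alternative is to verify $\pi(\mathscr{T})\prec\pi(\mathscr{T}^*)$ and then apply Lemma~2.7 with the convex function $f(x)=x\log_2 x$.

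It remains to realize $\pi(\mathscr{T}^*)$ as an actual chemical hypertree: begin with the hyperstar $S_5^k$ (one degree-$4$ hub, $4(k-1)$ pendents, four edges); then successively promote a pendent to a new degree-$4$ hub by attaching three fresh edges filled with new pendents, gaining one hub, three edges and $3(k-1)$ pendents per step. After $a-1$ such promotions one has $a$ hubs, $3a+1$ edges and $(3a+1)(k-1)+1$ vertices; attaching one further edge (for $i=1$) or two further edges (for $i=2$) at a single existing pendent creates the required vertex of ordinary degree $i+1$ without violating the maximum-degree-$4$ constraint. The main obstacle is the case analysis inside the greedy argument, namely ruling out, for each residual $i\in\{0,1,2\}$, every alternative integer profile with $n_4<a$ as strictly suboptimal; a secondary bookkeeping issue is checking that the explicit construction above indeed produces a simple connected hypertree with exactly $n$ vertices, $m$ edges and the prescribed Laplacian degree sequence.
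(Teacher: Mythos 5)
Your proposal is sound and arrives at the same extremal profile as the paper, but by a genuinely different mechanism. The paper takes a hypertree $\mathscr{T}$ attaining the maximum and, whenever two vertices satisfy $4>d_i\geq d_j\geq 2$, moves an edge from $v_j$ to $v_i$ (Lemma 2.4, using $\delta_v=(k-1)d_v$) to strictly increase $h$; hence the maximizer has at most one vertex of ordinary degree $2$ or $3$ ($b+c\leq 1$ in the paper's notation), and the counting identities --- which reduce to exactly your handshake relation $n_2+2n_3+3n_4=m-1$ --- then force the profile $[4k-4^{(a)},(i+1)(k-1),k-1^{(n-a-1)}]$ in each residue class of $m-1$ modulo $3$. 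You instead decouple the combinatorics from the analysis: you optimize the weights $0,2,3\log_2 3,8$ over integer profiles and then realize the winner explicitly. Each route buys something. The paper's exchange acts on genuine hypertrees, so realizability of the extremal profile is automatic, but it tacitly assumes the edge move keeps the hypergraph a connected hypertree (the paper states analogues only for unicyclic and bicyclic hypergraphs in Lemmas 2.5--2.6); your version never needs that, at the cost of the explicit construction, which you supply. Two points in your argument do need attention. First, the greedy step as stated only covers trading one degree-$4$ vertex against three budget units realized as $(n_3,n_2)=(1,1)$ or $(0,3)$; a profile with $n_4<a$ and $n_2\leq 1$ can force you to trade \emph{two} degree-$3$ vertices for a degree-$4$ plus a degree-$2$ (gain $10-6\log_2 3>0$), a case you must add --- or, better, take the majorization route you mention: the prefix bound $\sum_{v\in S}d_v\leq (m-1)+|S|$ together with $d_v\leq 4$ shows directly that $\pi(\mathscr{T}^*)$ majorizes every admissible profile, and Lemma 2.7 with the strictly convex $f(x)=x\log_2 x$ settles both the inequality and the equality case at once. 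Second, your construction starts from $S_5^k$ and so needs the degenerate case $a=0$ (i.e. $m\leq 3$) handled separately.
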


\begin{proof}
From the discussion above, we know that $h(\mathscr{T})$ satisfy the conclusion of Lemma 2.7. The more number of vertices with a large degree, the greater $h(\mathscr{T})$ is. Because the maximum degree of chemical hypertree at most 4, then the more the number of  Laplacian degree $4k-4$ is, the greater the $h(\mathscr{T})$ is. If there exists a pair $(d_i,d_j)$ such that $4>d_i\geq d_j\geq2$, then we can construct a new hypertree $\mathscr{T}'$ by  replacing the pair $(d_i,d_j)$ by $(d_i+1,d_j-1)$. By Lemma 2.4, we can also proof $h(\mathscr{T})< h(\mathscr{T}')$. Now we denote by $a,b,c,d$ the number of the vertices of degrees $4,3,2,1$, respectively. Then we have
\begin{align*}
\left\{
\begin{array}{rl}
&4a+3b+2c+d=km\\
&a+b+c+d=n\\
&b+c\leq1.
\end{array}
\right.
\end{align*}
From the above expressions, we infer the following solutions:

Case 1. If $m-1\equiv0$ $(mod\ 3)$, then $a=\frac{m-1}{3}$, $b=c=0$, $d=n-a$.

Case 2. If $m-1\equiv0$ $(mod\ 3)$, then $a=\frac{m-2}{3}$, $b=0$, $c=1$, $d=n-a-1$.

Case 3. If $m-1\equiv0$ $(mod\ 3)$, then $a=\frac{m-3}{3}$, $b=1$, $c=0$, $d=n-a$.

Therefore, the proof is completed.
\end{proof}

\begin{thm}
Let $\mathscr{T}$ be a chemical $k$-uniform $(k\geq3)$ hypertree on $n$ vertices with $m=\frac{n-1}{k-1}$ edges. Then
\begin{align*}
\log_2[k(k-1)m]-\frac{1}{k(k-1)m}[a(4k-4)\log_2(4k-4)+(i+1)(k-1)\log_2(i+1)(k-1)+\\(n-a-1)(k-1)\log_2(k-1)]\leq I_{\delta}(\mathscr{T})\leq\log_2[k(k-1)m]-\frac{1}{k(k-1)m}[(m-1)(2k-2)\\\log_2(2k-2)+(n-m+1)(k-1)\log_2(k-1)],
\end{align*}
where the first equality holds if and only if $\mathscr{T}\in \mathscr{T^{*}}$, and the second equality holds if and only if $\mathscr{T}\cong P_n$.
\end{thm}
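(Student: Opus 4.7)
The strategy is to translate the extremal problem for $I_\delta(\mathscr{T})$ into an extremal problem for $h(\mathscr{T})$, which is already settled by the earlier results. From the computation immediately following Definition 2.1, one has the identity
\begin{equation*}
I_\delta(\mathscr{T}) \;=\; \log_2[k(k-1)m] \;-\; \frac{1}{k(k-1)m}\, h(\mathscr{T}).
\end{equation*}
Since the prefactor of $h$ is negative, maximizing $I_\delta$ corresponds to minimizing $h$, and minimizing $I_\delta$ corresponds to maximizing $h$. Thus it suffices to bound $h(\mathscr{T})$ above and below over the class of chemical $k$-uniform hypertrees on $n$ vertices and then substitute.

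For the lower bound on $I_\delta$ (the first inequality), I would invoke Lemma 3.5 directly: it asserts $h(\mathscr{T}) \le h(\mathscr{T}^*)$, with equality if and only if $\mathscr{T}\in\mathscr{T}^*$. The Laplacian degree sequence of $\mathscr{T}^*$ is $[\,4k-4^{(a)},\,(i+1)(k-1),\,k-1^{(n-a-1)}\,]$, where $m-1=3a+i$ with $i\in\{0,1,2\}$, so a direct evaluation of $h(\mathscr{T}^*)$ gives
\begin{equation*}
h(\mathscr{T}^*) = a(4k-4)\log_2(4k-4) + (i+1)(k-1)\log_2[(i+1)(k-1)] + (n-a-1)(k-1)\log_2(k-1),
\end{equation*}
and substituting into the identity for $I_\delta$ produces exactly the first bound claimed in the theorem, with equality iff $\mathscr{T}\in\mathscr{T}^*$.

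For the upper bound on $I_\delta$ (the second inequality), the key observation is that the hyperpath $P_n$ is itself a chemical hypertree, since every vertex of $P_n$ has degree at most $2 \le 4$. Theorem 3.1 shows that $h(\mathcal{T}) \ge h(P_n)$ over all $k$-uniform hypertrees of the given order, so in particular this inequality holds over the narrower class of chemical hypertrees, and the characterization of equality by $\mathscr{T}\cong P_n$ carries over. Inserting the known value
\begin{equation*}
h(P_n) = (m-1)(2k-2)\log_2(2k-2) + (n-m+1)(k-1)\log_2(k-1)
\end{equation*}
into the entropy identity yields the second bound.

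No new obstacle should arise beyond routine bookkeeping: the only subtlety is to observe that the lower bound on $h$ established in Theorem 3.1 for the full class of hypertrees restricts correctly to the chemical subclass because the extremal hypertree $P_n$ already lies in that subclass, which is why the chemical-hypertree minimum of $h$ (equivalently the maximum of $I_\delta$) coincides with the unrestricted one, whereas the maximum of $h$ differs, because $S_{m+1}^k$ has maximum degree $m$, violating the chemical constraint $d\le 4$, forcing the new extremal configuration $\mathscr{T}^*$ identified in Lemma 3.5.
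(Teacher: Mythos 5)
Your proposal is correct and follows essentially the same route as the paper: observe that $P_n$ is itself a chemical hypertree so the maximum of $I_\delta$ (minimum of $h$) carries over from Theorem 3.1, and invoke Lemma 3.5 for the maximum of $h$ (minimum of $I_\delta$), attained by $\mathscr{T}^*$. Your explicit evaluations of $h(P_n)$ and $h(\mathscr{T}^*)$ and the remark on why $S_{m+1}^k$ is excluded by the degree constraint are just slightly more detailed bookkeeping than the paper provides.
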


\begin{proof}
Observe that the hypergraph $P_n$ is also a chemical hypertree. Then from theorem 3.1, we know that $P_n$ attains the maximum value of $I_{\delta}(\mathscr{T})$ among all chemical trees. Based on lemma 3.5 we know $I_{\delta}(\mathscr{T})$ can obtain the minimum value if $\mathscr{T}\in \mathscr{T^{*}}$. Then
\begin{align*}
\log_2[k(k-1)m]-\frac{1}{k(k-1)m}[a(4k-4)\log_2(4k-4)+(i+1)(k-1)\log_2(i+1)(k-1)+\\(n-a-1)(k-1)\log_2(k-1)]\leq I_{\delta}(\mathscr{T})\leq\log_2[k(k-1)m]-\frac{1}{k(k-1)m}[(m-1)(2k-2)\\\log_2(2k-2)+(n-m+1)(k-1)\log_2(k-1)].
\end{align*}
This completes the proof.

\end{proof}



\section{Conclusions}
In this paper, we extend some results of simple graphs to $k$-uniform hypergraphs. By using the properties of Laplacian degrees and the operation of moving edges on hypergraphs, we obtain the extremal values of graph entropy based on Laplacian degrees of $k$-uniform hypergraphs in hypertrees, unicyclic  hypergraphs and bicyclic hypergraphs, respectively. In the next work, the above conclusions can be extended to $n$-cyclic $k$-uniform hypergraphs, but because there are many different structures in $n$-cyclic $k$-uniform hypergraphs, the Laplacian degree sequences will be very changeable, which is a difficult point in the study.

\end{document}